\newcommand{\ity}{\infty}
\newcommand{\C}{\mathbb{C}}
\newcommand{\N}{\mathbb{N}}
\newcommand{\F}{\mathfrak{F}}
\newcommand{\G}{\mathfrak{G}}
\newcommand{\A}{\mathfrak{A}}
\newcommand{\D}{\mathbb{D}}
\numberwithin{equation}{section}
\newtheorem{theorem}{Theorem}[section]
\newtheorem{lemma}[theorem]{Lemma}
\theoremstyle{remark}
\newtheorem{remark}[theorem]{Remark}
\newtheorem{example}[theorem]{Example}
\newtheorem{definition}[theorem]{Definition}
\thanks {The research work of the first  author is supported by research fellowship from Council of Scientific and Industrial Research (CSIR), New Delhi.}
\begin{document}

\title[normality and schwarzian derivatives]{A note on Schwarzian derivatives and normal families}
\author[D. Kumar]{Dinesh Kumar}
\address{Department of Mathematics, University of Delhi,
Delhi--110 007, India}

\email{dinukumar680@gmail.com }

%

\author[S. Kumar]{Sanjay Kumar}

\address{Department of Mathematics, Deen Dayal Upadhyaya College, University of Delhi,
Delhi--110 007, India }

\email{sanjpant@gmail.com}

\begin{abstract}
We establish a criterion for local boundedness and hence normality of a family $\F$ of analytic functions on a domain $D$ in the complex plane whose corresponding family of derivatives is locally bounded. Furthermore we investigate the relation between domains of normality of a family $\F$ of meromorphic functions and its corresponding Schwarzian derivative family. We also establish some criterion for the Schwarzian derivative family of a family $\F$  of analytic functions on a domain $D$ in the complex plane to be a normal family.
\end{abstract}
\keywords{Schwarzian derivative, normal family, meromorphic function, locally bounded}

\subjclass[2010]{37F10, 30D05}

\maketitle

\section{Introduction}\label{sec1}
A family $\F$ of analytic functions on a domain $\Omega\subset\C$ is locally bounded on $\Omega$  if it is uniformly bounded on each compact subset of $\Omega.$ Equivalently, $\F$ is locally bounded on $\Omega$ if it is uniformly bounded in a neighborhood of each point of $\Omega$ (see \cite{complex analysis},~\cite{schiff}). By Marty's theorem (see \cite{complex analysis}), a family $\F$ of meromorphic functions on a domain $\Omega\subset\C$ is normal if and only if the family $\F^\#=\{f^\#: f\in\F\}$ of the corresponding spherical derivatives is locally uniformly bounded in $\Omega,$ where $f^\#=\frac{|f'|}{1+|f|^2}.$

It is well known ~\cite{schiff}, for a family $\F$ of locally bounded analytic functions on a domain $D$, the corresponding family of derivatives  $\F'=\{f' : f\in\F\}$ also forms a locally bounded family in $D$. Its converse is not true in full generality, however some partial converse ~\cite{schiff} holds. We also establish a partial converse to this result. 
To our best knowledge, 
it hasn't been studied so far what can be said about the relation between the domains of normality of a family $\F$ of meromorphic functions, its conjugate family and the corresponding Schwarzian derivative (SD for short) family. The aim of the present paper is to tackle these questions. We also establish a criterion for the SD family of a family $\F$ of analytic functions on a domain $D\subset\C$ to be a  normal family.

For a meromorphic function $f$ on $\C$, its SD is defined by $S_{f}(z)=\frac{f'''(z)}{f'(z)}-\frac{3}{2}(\frac{f''(z)}{f'(z)})^2,$ which has the invariance property $S_{\tau\circ f}=S_f$ for every Mobius transformation $\tau(z)=\frac{az+b}{cz+d},\, ad-bc\neq 0.$ It is well known \cite{lehto} $S_f=0$ if and only if $f$ is a Mobius transformation. Also if $f$ is a meromorphic function on $\C$ and $f(z)\neq 0$ for all $z\in\C,$ then $S_f=S_{\frac{1}{f}}.$ We show this fact is true even if the meromorphic function $f$ omits a finite value. For  meromorphic functions $f$ and $g$ for which the composition $g\circ f$ is defined, $S_{g\circ f}=(S_g\circ f){f'}^2+S_f$ holds. As $S_\tau=0$ for each Mobius transformation $\tau, S_{g\circ\tau}=(S_g\circ\tau){\tau'}^2.$

Recently, Steinmetz \cite{steinmetz} gave a completely different proof of a normality criterion involving spherical derivatives given by Grahl and Nevo \cite{grahl}. The proof was based on a property equivalent to $f^\#(z)>0,$ which is again equivalent to the fact that corresponding SD is holomorphic on the given domain.

\section{Theorems and their proofs}\label{sec2}
\begin{theorem}\label{sec2,thm1}
Let $\F$ be a family of analytic functions on the unit disk $\D$ such that $f(0)=0$ for all $f\in\F.$ Suppose $\F'$ is locally bounded on $\D.$ Then the family $\F$ is locally bounded on $\D.$
\end{theorem}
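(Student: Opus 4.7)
The plan is to use the fundamental theorem of calculus, exploiting the common vanishing condition $f(0)=0$ to eliminate any additive constant when recovering $f$ from $f'$.

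First I would fix a compact set $K\subset\D$ and set $r=\max_{z\in K}|z|$, choosing some $r'$ with $r<r'<1$. The closed disk $\overline{D}(0,r')$ is a compact subset of $\D$ that contains $K$ together with every straight-line segment $[0,z]$ for $z\in K$. By hypothesis $\F'$ is locally bounded, so there exists $M>0$ such that $|f'(w)|\le M$ for all $w\in\overline{D}(0,r')$ and all $f\in\F$.

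Next, for any $f\in\F$ and $z\in K$, the condition $f(0)=0$ gives
\[
f(z)=\int_{0}^{z}f'(w)\,dw,
\]
where the integral is taken along the straight segment from $0$ to $z$. This segment lies entirely in $\overline{D}(0,r')$, so estimating the integral yields
\[
|f(z)|\le |z|\cdot \sup_{w\in[0,z]}|f'(w)|\le r\,M.
\]
Since this bound is independent of both $z\in K$ and $f\in\F$, the family $\F$ is uniformly bounded on $K$, and hence locally bounded on $\D$.

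There is no real obstacle here; the only subtlety is making sure the auxiliary compact set on which $\F'$ is controlled actually contains all the line segments used for integration, which is why I would choose the disk $\overline{D}(0,r')$ rather than $K$ itself. The normalization $f(0)=0$ is essential: without it, one could only bound $|f(z)-f(0)|$, and the constants $f(0)$ could be unbounded across $\F$.
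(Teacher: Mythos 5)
Your proof is correct and rests on the same key idea as the paper's: integrating $f'$ along the straight segment from $0$ to $z$ and using the normalization $f(0)=0$ to convert the local bound on $\F'$ into a bound on $|f(z)|$. The only organizational difference is that you obtain a uniform bound on each compact set directly (choosing the auxiliary disk $\overline{D}(0,r')$ so that every segment of integration stays where $\F'$ is controlled), whereas the paper routes the argument through equicontinuity plus pointwise boundedness; your version is the more explicit and self-contained of the two.
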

\begin{proof}
As $\F'$ is locally bounded, therefore  $\F$ is equicontinuous on each compact subset of $\D.$ We now show  $\F$ is pointwise bounded on $\D$ which will imply  $\F$ is locally bounded on $\D.$ Let $z\in\D$ be arbitrary. For each $f\in\F$ integrating $f$ along the straight line segment $L$ joining $0$ to $z$ we have
\begin{align*}
|f(z)-f(0)| & =|\int_{0}^{z}f'(z)dz|\\
                      &\leq K|z|
\end{align*}
for some $K>0$ depending on the line segment $L.$ Now consider any disk about the point $z$ and integrating along any straight line segment in the disk one gets a similar inequality and the result follows.
\end{proof}
\begin{remark}\label{sec2,rem1}
The above result is true for any arbitrary domain $D\subset\C$ and for any $z_0\in D$ which is common fixed point of the analytic family $\F.$
\end{remark}
We now establish a theorem which gives a criterion for the SD family of a family $\F$ of analytic functions on a domain $D\subset\C$ to be a normal family.
\begin{theorem}\label{sec2,thm2}
Let $\F$ be a family of analytic functions on a domain $D\subset\C$ such that the family of its derivatives $\F'$ satisfies:
\begin{enumerate}
\item\ $|f'|\geq\epsilon$ for all $f\in\F$ and for some fixed $\epsilon>0$\\
\item\ The family $\F'$ is locally bounded.
\end{enumerate}
Then the SD family is normal on $D.$
\end{theorem}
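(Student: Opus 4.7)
The plan is to reduce the theorem to showing that $\{S_f : f \in \F\}$ is a locally bounded family of analytic functions on $D$; Montel's theorem will then deliver normality immediately. Note that, by hypothesis~(1), $f'$ is nowhere zero on $D$ for every $f \in \F$, so each Schwarzian
\[
S_f = \frac{f'''}{f'} - \frac{3}{2}\left(\frac{f''}{f'}\right)^2
\]
is well defined and analytic on $D$, and it only remains to produce a uniform local bound.

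The first step is to upgrade the local boundedness of $\F'$ to local boundedness of the families $\F'' = \{f'' : f \in \F\}$ and $\F''' = \{f''' : f \in \F\}$. This is a standard Cauchy estimate argument: given $z_0 \in D$, choose $r>0$ with $\overline{B(z_0,2r)} \subset D$; on this closed disk $|f'| \leq M$ uniformly in $f \in \F$, and differentiating under the Cauchy integral representation of $f'$ yields uniform bounds $|f''| \leq M_1$ and $|f'''| \leq M_2$ on the smaller disk $\overline{B(z_0,r)}$. Combining these upper bounds with the uniform lower bound $|f'|\geq \epsilon$ from hypothesis~(1) gives, for all $z \in \overline{B(z_0,r)}$ and all $f \in \F$,
\[
|S_f(z)| \leq \frac{|f'''(z)|}{|f'(z)|} + \frac{3}{2}\frac{|f''(z)|^2}{|f'(z)|^2} \leq \frac{M_2}{\epsilon} + \frac{3 M_1^2}{2\epsilon^2},
\]
a bound independent of $f$. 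Hence $\{S_f\}$ is locally bounded on $D$, and Montel's theorem finishes the proof.

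There is no serious obstacle here; if anything needs care it is merely the passage from local boundedness of $\F'$ to that of its higher derivatives, which is the classical Weierstrass/Cauchy estimate device pervasive in normal family theory (see \cite{schiff}). The two hypotheses divide the work cleanly: hypothesis~(2) controls the numerators $f''$ and $f'''$ appearing in $S_f$, while hypothesis~(1) keeps the denominators $f'$ and $(f')^2$ bounded away from zero, and together they are precisely what is required to force $\{S_f\}$ into a locally bounded, and hence normal, family.
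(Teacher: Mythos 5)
Your proof is correct and follows essentially the same route as the paper: derive local boundedness of $\F''$ and $\F'''$ from that of $\F'$ via Cauchy estimates, use the lower bound $|f'|\geq\epsilon$ to control the denominators in the formula for $S_f$, conclude local boundedness of the Schwarzian family, and apply Montel's theorem. If anything, your write-up is more explicit than the paper's about where hypothesis (1) actually enters, namely as a uniform bound away from zero for the denominators rather than merely as a guarantee of local univalence.
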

\begin{proof}
$|f'|>0$ is equivalent to local univalence of the function $f.$ As the derivative of a locally bounded family of analytic functions is locally bounded (see \cite{complex analysis}), therefore the families $\F'' $ and $\F'''$ are locally bounded. For $f\in \F,\,S_f(z)=\frac{f'''(z)}{f'(z)}-\frac{3}{2}(\frac{f''(z)}{f'(z)})^2$ and using the local boundedness of the families $\F'' $ and $\F''',$ one obtains the SD family is locally bounded on $D$ and so is normal on $D$ by Montel's theorem.
\end{proof}
We now show if a meromorphic function $f$ omits a finite  value then $S_f=S_{\frac{1}{f}}.$\\
\begin{theorem}\label{sec2,thm3}
For a meromorphic function $f$ on $\C$ which omits a finite value,  $S_f=S_{\frac{1}{f}}.$
\end{theorem}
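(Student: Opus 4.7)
The plan is to reduce the identity $S_f = S_{1/f}$ to the already-recorded special case in which $f$ has no zeros, by exploiting the M\"obius invariance of the Schwarzian twice. Let $a$ denote the finite value omitted by $f$.

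First, set $g = f - a$. Since translation is a M\"obius transformation, the invariance property $S_{\tau \circ f} = S_f$ recorded in the introduction gives $S_g = S_f$. Moreover, $g$ omits the value $0$ on $\C$, so by the identity quoted just before the theorem statement, $S_g = S_{1/g}$.

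Next, I would observe that $1/g$ and $1/f$ are themselves M\"obius related. Writing $w = 1/f$, a short algebraic computation gives
\[ \frac{1}{g} = \frac{1}{f - a} = \frac{w}{1 - a w} = \mu(w), \]
where $\mu(w) = w/(1 - a w)$ is a M\"obius transformation (its associated determinant being $1$). Applying the composition law $S_{\mu \circ h} = (S_\mu \circ h)(h')^2 + S_h$ with $h = 1/f$, and using $S_\mu \equiv 0$, one obtains $S_{1/g} = S_{1/f}$.

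Chaining the three identities $S_f = S_g$, $S_g = S_{1/g}$, and $S_{1/g} = S_{1/f}$ completes the proof. The step most likely to require a moment's thought is spotting the M\"obius map $\mu$ that links $1/(f-a)$ to $1/f$; once this observation is made, everything else is a mechanical application of the Schwarzian identities listed in the introduction, without any need for direct computation with $f', f'', f'''$.
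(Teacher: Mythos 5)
Your proof is correct and follows essentially the same route as the paper: translate by the omitted value to reduce to the case of a function omitting $0$, then chain the three Schwarzian identities. In fact your argument is slightly more complete than the paper's, since you explicitly exhibit the M\"obius map $\mu(w)=w/(1-aw)$ justifying the step $S_{1/(f-a)}=S_{1/f}$, which the paper merely asserts.
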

\begin{proof}
Suppose $f$ omits a value $w\in\C.$ Define a new function $g(z)=f(z)-w.$ Then $g$ omits $0$. Also $S_g=S_{\frac{1}{g}}, S_g=S_{f-w}=S_f$ and $S_{\frac{1}{g}}=S_{\frac{1}{f-w}}=S_{\frac{1}{f}},$ the result follows.
\end{proof}
We wanted to analyse the  relation  between the domains of normality of a family $\mathfrak{F}$ of meromorphic functions on the complex plane $\C$ and that of its  SD family. We observe as such no correlation between them and tried to substantiate this through multiple examples. 
 In all the subsequent examples to follow the domain of the family of meromorphic functions under consideration is the complex plane $\C.$
\begin{example}\label{eg1}
$f_n(z)=e^{nz}.$ Its domain of normality is $\C\setminus\{z:Re(z)=0\}.$ The SD family is $S_{f_n}(z)=\frac{-n^2}{2}$ which is normal in $\C.$
\end{example}
\begin{example}\label{eg2}
$f_n(z)=e^{\frac{z}{nz+1}}.$ The domain of normality of this family is the complex plane $\C.$ The SD family is $S_{f_n}(z)=\frac{-1}{2(nz+1)^4}$ which is normal in the punctured complex plane $\C\setminus{0}.$
\end{example}
However there is possibility of similar domains of normality as shown by the following example.
\begin{example}\label{eg3}
$f_n(z)=e^z-n.$ Domain of normality is $\C.$ The SD family is $S_{f_n}(z)=\frac{-1}{2}$ which is also normal in $\C.$
\end{example}

Under some condition on the family $\F$ of meromorphic functions, we  show its domain of normality is contained in the domain of normality of the corresponding SD family. We will need the following lemmas .
\begin{lemma}\cite{lehto}\label{sec2,lemma1}
Consider  a family $\F$ of locally injective meromorphic functions on a domain $D\subset\C$ which converges locally uniformly to a locally injective meromorphic function $f$ on $D$, then the corresponding SD family of $\F$ converges locally uniformly on $D$ to $S_f$.
\end{lemma}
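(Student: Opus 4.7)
My plan is to localize at each point $z_0 \in D$ and split into two cases according to whether $f(z_0)$ is finite or infinite. Since locally uniform convergence on $D$ is equivalent to convergence on a neighborhood of every point, this reduces matters to two local statements, and the Möbius invariance of the Schwarzian will let me reduce the second case to the first.

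Suppose first that $f(z_0) \in \C$. Local injectivity together with meromorphicity forces $f$ to be holomorphic with $f'(z_0) \neq 0$ on some closed disk $\overline{V}$ around $z_0$, on which $f$ is bounded. The locally uniform convergence $f_n \to f$ in the spherical metric, combined with the boundedness of $f$ on $\overline{V}$, implies that for all sufficiently large $n$ the functions $f_n$ are bounded, hence holomorphic, on $\overline{V}$ and $f_n \to f$ uniformly there in the Euclidean sense. I would then invoke the Weierstrass theorem to obtain $f_n^{(k)} \to f^{(k)}$ uniformly on $\overline{V}$ for each $k$, and note that $f'_n$ is eventually bounded away from zero. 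Substituting into the explicit formula $S_f = f'''/f' - (3/2)(f''/f')^2$ and using the standard estimates for quotients of uniformly convergent sequences with nonvanishing denominators then yields $S_{f_n} \to S_f$ uniformly on $\overline{V}$.

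Second, if $f(z_0) = \infty$, then local injectivity forces the pole at $z_0$ to be simple, so $g := 1/f$ is holomorphic and locally injective near $z_0$ with $g(z_0) = 0$ and $g'(z_0) \neq 0$. Setting $g_n := 1/f_n$, I would transfer the hypotheses to the pair $(g_n, g)$: spherical convergence $f_n \to f$ gives spherical, hence (since $g$ is now finite at $z_0$) locally uniform Euclidean convergence $g_n \to g$ near $z_0$. The first case applied to $(g_n)$ then gives $S_{g_n} \to S_g$ locally uniformly near $z_0$. Möbius invariance of $S$, noted in Section~\ref{sec1}, yields $S_{g_n} = S_{f_n}$ and $S_g = S_f$ at every point where both sides are naively defined, and since $S_{f_n}$ and $S_f$ are holomorphic at the simple poles by local injectivity, the identity extends across $z_0$, giving the desired convergence.

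The main obstacle is the pole case: I must justify that passing to reciprocals is harmless (leaning on spherical convergence and the Hurwitz-type observation that $f_n$ is eventually bounded on any compact set avoiding the poles of $f$) and that the identity $S_{f_n} = S_{1/f_n}$ extends holomorphically across the poles where the two expressions are not literally defined in the same variables. Once these technicalities are dispatched, the remaining content is Weierstrass convergence of derivatives plus algebraic manipulation of the Schwarzian formula.
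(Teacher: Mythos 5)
Your proposal is correct, and it is considerably more substantive than what the paper offers: the paper's ``proof'' of this lemma is a single sentence that merely restates the hypotheses and the conclusion (``Using the local injectiveness \dots and local uniform convergence \dots we get the corresponding SD family converges locally uniformly''), deferring the actual content to the citation of Lehto. You, by contrast, supply the real argument. Your two-case localization is the standard and correct way to do it: at points where $f$ is finite, local injectivity gives $f'\neq 0$, spherical convergence to a finite limit upgrades to Euclidean uniform convergence on a small closed disk, Weierstrass gives convergence of all derivatives, and the nonvanishing of $f'$ (hence eventually of $f_n'$) makes the explicit Schwarzian formula converge; at simple poles you pass to $1/f_n$ and $1/f$ and use the M\"obius invariance $S_{1/h}=S_h$ to transport the conclusion back, with the identity extending holomorphically across the pole precisely because local injectivity makes the pole simple. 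The only points worth making fully explicit in a written version are (i) shrinking the disk in the first case so that $|f'|\geq\delta>0$ on all of it, and (ii) in the pole case, choosing the disk so that $|f|\geq 1$ there, which guarantees $g=1/f$ is bounded and the first case applies verbatim to $g_n\to g$. Neither is a gap, just bookkeeping. What your approach buys is an actual self-contained proof from first principles (Weierstrass plus M\"obius invariance); what the paper's approach buys is brevity at the cost of proving nothing.
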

\begin{proof}
Using the local injectiveness of both the family $\F$ and the limit function $f$ and  local uniform convergence of $\F$ on $D$ to $f$, we get the corresponding SD family of $\F$ converges locally uniformly on $D$ to $S_f$ and the result gets proved.
\end{proof}
\begin{lemma}\cite{schiff}\label{sec2,lemma2}
Let $\F$ be a normal family of analytic functions on a domain $D\subset\C.$ If for all $f\in\F, |f(\zeta)|\leq L$, for some point $\zeta\in D$ and for some $L<\ity,$ then $\F$ is locally bounded.
\end{lemma}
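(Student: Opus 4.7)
The plan is to argue by contradiction, leveraging the basic dichotomy for normal families of holomorphic (as opposed to general meromorphic) functions: from any sequence in $\F$ one can extract a subsequence that converges locally uniformly on $D$ either to a holomorphic function $g:D\to\C$ or to the constant $\infty$ in the spherical sense. Suppose, toward a contradiction, that $\F$ is not locally bounded on $D$. Then there exist a compact set $K\subset D$ and a sequence $\{f_n\}\subset\F$ such that $M_n:=\sup_{z\in K}|f_n(z)|\to\infty$. By enlarging $K$ if necessary (for example, replacing $K$ by its union with a compact arc in $D$ joining $K$ to $\zeta$), we may assume $\zeta\in K$.

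Invoking normality of $\F$, extract a subsequence $\{f_{n_k}\}$ converging locally uniformly on $D$ to some limit. If the limit is a holomorphic function $g$, then $|g|$ is bounded on the compact set $K$ by some constant $M$, and local uniform convergence gives $|f_{n_k}(z)|\leq M+1$ for all $z\in K$ and all sufficiently large $k$, contradicting $M_{n_k}\to\infty$. If instead the subsequence converges locally uniformly to $\infty$, then in particular $|f_{n_k}(\zeta)|\to\infty$, contradicting the hypothesis $|f_{n_k}(\zeta)|\leq L$. Either case yields a contradiction, so $\F$ must be locally bounded.

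The only real content in this argument is invoking the holomorphic-functions dichotomy, which is exactly what rules out a subsequence that blows up on part of $D$ while remaining bounded on another part. The one-point bound $|f(\zeta)|\leq L$ is then precisely what is needed to eliminate the escape-to-infinity alternative, while the surviving alternative---local uniform convergence to a holomorphic limit---forces boundedness on each compact set essentially for free. I do not anticipate a genuine obstacle: the proof is in effect a short packaging of Montel's theorem together with this dichotomy.
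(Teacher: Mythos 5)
Your argument is correct, and it is essentially the standard proof of this result (the paper itself gives no proof, citing it directly from Schiff's \emph{Normal Families}, where the same contradiction argument via the holomorphic/$\infty$ dichotomy is used). The one-point bound at $\zeta$ rules out locally uniform divergence to $\infty$, and locally uniform convergence to a holomorphic limit rules out blow-up on a compact set, exactly as you say.
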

\begin{theorem}\label{sec2,thm4}
Let $\F$ be a family of locally injective meromorphic functions on $\C$. Let $D$ be the domain of normality of $\F$. Then  $D$ is contained in the domain of normality of the corresponding SD family if the limit function of $\F$ on $D$ is locally injective.
\end{theorem}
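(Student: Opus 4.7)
The plan is to reduce the statement directly to Lemma~\ref{sec2,lemma1} via the sequential characterization of normality. To show that $D$ is contained in the domain of normality of the SD family, I would fix a point $z_0\in D$ and verify that the SD family is normal in some neighborhood of $z_0$; since normality is a local property, it suffices to produce, for every sequence $\{S_{f_n}\}$ with $f_n\in\F$, a subsequence converging locally uniformly on $D$ (to a meromorphic function or to $\infty$).

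Given such a sequence, the normality of $\F$ on $D$ supplies a subsequence, which I would still denote by $\{f_n\}$, converging locally uniformly on $D$ to some limit. By the hypothesis this limit function $f$ is locally injective on $D$, so in particular it is a (non-constant) meromorphic function and not identically $\infty$. Since each member of $\F$ is assumed locally injective, the triple of hypotheses needed for Lemma~\ref{sec2,lemma1} is met: $\{f_n\}$ is a sequence of locally injective meromorphic functions converging locally uniformly on $D$ to the locally injective meromorphic function $f$. Invoking that lemma, I obtain
\[
S_{f_n}\longrightarrow S_f \quad\text{locally uniformly on } D.
\]
In particular $S_f$ is meromorphic (in fact holomorphic wherever $f'\neq 0$) on $D$, and $\{S_{f_n}\}$ has a locally uniformly convergent subsequence on $D$.

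Since the initial sequence $\{S_{f_n}\}$ was arbitrary, every sequence in the SD family admits a locally uniformly convergent subsequence on $D$, so the SD family is normal on $D$, giving the claimed inclusion of domains of normality.

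I do not anticipate a serious obstacle: the substance of the argument is already packaged in Lemma~\ref{sec2,lemma1}, and the only point requiring care is ruling out the possibility that the normal limit of $\{f_n\}$ is the constant $\infty$ (which would invalidate the application of the lemma). This is precisely where the hypothesis that the limit function be locally injective is used; without it, one could have sequences like $f_n(z)=n$ whose SDs are identically zero but whose limit is the constant $\infty$, and the conclusion would still hold trivially but the proof via Lemma~\ref{sec2,lemma1} would need a small separate case. Assuming local injectivity of the limit, as in the statement, this case is excluded and the argument proceeds cleanly.
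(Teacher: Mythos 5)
Your proposal is correct and follows essentially the same route as the paper: both reduce the theorem to Lemma~\ref{sec2,lemma1} applied to a locally uniformly convergent (sub)sequence of the locally injective family and its locally injective limit. Your version is in fact slightly more careful than the paper's, since you make the sequential characterization of normality explicit and note why the constant-$\infty$ limit must be excluded, but the underlying argument is the same.
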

\begin{proof}
Let $f$ be the limit function of $\F$ on $D$. Using Lemma~\ref{sec2,lemma1},  the SD family of $\F$ converges locally uniformly on $D$ to $S_f$. By definition $D$ is contained in the domain of normality of the SD family.
\end{proof}

\begin{theorem}\label{sec2,thm5}
Let $\F$ be a family of analytic functions on $\C.$ Let $D$ be the domain of normality of $\F.$ Suppose $\F$ satisfies following conditions on $D$:
\begin{enumerate}
\item\ For some point $\zeta\in D$ and for all $f\in\F, |f(\zeta)|\leq L$ for some   $L<\ity,$\\
\item\ $|f'|\geq\epsilon$ for all $f\in\F$ and for some fixed $\epsilon>0$
\end{enumerate}
Then $D$ is contained in the domain of normality of the corresponding SD family of $\F.$
\end{theorem}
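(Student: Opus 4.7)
The plan is to combine Lemma~\ref{sec2,lemma2} with the argument already used in the proof of Theorem~\ref{sec2,thm2}. Since $\F$ is normal on $D$ and condition~(1) gives a uniform bound $|f(\zeta)|\leq L$ at the single point $\zeta\in D$, Lemma~\ref{sec2,lemma2} immediately upgrades normality to local boundedness: $\F$ is locally bounded on $D$.

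Next, I would invoke the standard fact (used already in the proof of Theorem~\ref{sec2,thm2}) that the derivative of a locally bounded family of analytic functions is again locally bounded. Applying this iteratively yields that $\F'$, $\F''$, and $\F'''$ are all locally bounded on $D$. Meanwhile, hypothesis~(2) says $|f'|\geq\epsilon$ on $D$ for every $f\in\F$, so $|1/f'|\leq 1/\epsilon$ uniformly. Hence the families $\{f''/f':f\in\F\}$ and $\{f'''/f':f\in\F\}$ are locally bounded on $D$.

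From the formula
\[
S_f(z)=\frac{f'''(z)}{f'(z)}-\frac{3}{2}\B(\frac{f''(z)}{f'(z)}\B)^{2},
\]
it follows that the SD family $\{S_f:f\in\F\}$ is locally bounded on $D$, and by Montel's theorem it is therefore normal on $D$. Thus $D$ is contained in the domain of normality of the SD family, as claimed.

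I do not see a genuine obstacle here: conditions~(1) and~(2) were tailored so that Lemma~\ref{sec2,lemma2} converts normality into local boundedness, and then the proof reduces to that of Theorem~\ref{sec2,thm2}. The only minor subtlety is noting that one must use hypothesis~(1) (rather than normality alone) to pass from $\F$ normal to $\F$ locally bounded, since in the meromorphic sense a normal family need not be locally bounded; condition~(1) rules out the degenerate escape of limit functions to $\infty$.
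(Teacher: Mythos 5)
Your proof is correct and follows essentially the same route as the paper: apply Lemma~\ref{sec2,lemma2} to upgrade normality to local boundedness of $\F$ on $D$, and then run the argument of Theorem~\ref{sec2,thm2} (which you unfold explicitly, while the paper simply cites that theorem) to conclude the SD family is locally bounded and hence normal by Montel's theorem. Your closing remark about why hypothesis~(1) is needed is a sensible clarification but does not change the argument.
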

\begin{proof}
Using Lemma~\ref{sec2,lemma2}\, $\F$ is locally bounded and from Theorem~\ref{sec2,thm2} the SD family is locally bounded on $D$ and so is normal on $D$ by Montel's theorem. The result then follows.
\end{proof}
We will require the following definition.
\begin{definition}
Let $f$ and $g$  be two meromorphic functions on the complex plane $\C.$ Then $f$ is conjugate to $g$ denoted by $f\sim g$, if there exists a Mobius transformation $\phi$ satisfying $\phi\circ f=g\circ\phi.$\\
Analogously one defines  two families $\F$ and $\G$ of meromorphic functions to be conjugate if there exist a family $\A$ of Mobius transformations satisfying $\phi\circ f=g\circ\phi$ for all $f\in\F,\,g\in\G$ and $\phi\in\A.$
\end{definition}
We further analyse the relationship between the domain of normality of a family of meromorphic functions and that  of its conjugate family. The resultant is no correlation justified with examples.
\begin{example}\label{eg4}
Consider $f_n(z)=e^{nz}$ and $g_n(z)=ne^z$ for all $n\in\N.$ Then $f_n\sim g_n$ under $\phi_n(z)=nz,\,n\in\N.$ The domain of normality of the former is $\C\setminus\{z:Re(z)=0\}$ while that of the latter is $\C.$
\end{example}
There is possibility of similar domains of normality as exhibited by the following example.
\begin{example}\label{eg7}
Consider $f_n(z)=e^{z+n}$ and $g_n(z)=e^z+n$ for all $n\in\N.$ Then $f_n\sim g_n$ under $\phi_n(z)=z+n,\,n\in\N.$ Both the families have domain of normality as $\C.$
\end{example}
Using the fact that the SD of a Mobius transformation is 0, one observes  if $\F$ and $\G$ are two families of meromorphic functions conjugate under the family $\A$ of Mobius transformations, then their corresponding SD family satisfies the important relation $(S_g\circ \phi)(\phi')^2=S_f$ for all $f\in\F,\,g\in\G$ and $\phi\in\A$ where $f\sim g$ under $\phi.$\\




\end{document}